\def\({\left(}       \def\){\right)}
\theoremstyle{plain}
\newtheorem{Th}{Theorem}[section]
\newtheorem{Lem}[Th]{Lemma}
\newtheorem{Prop}[Th]{Proposition}
 \theoremstyle{definition}
\newtheorem{?}[Th]{Problem}
\begin{document}

\title{ Hausdorff operators on  Bergman spaces of the upper half plane}

\author{Georgios Stylogiannis}
\maketitle
\begin{abstract}
  In this paper we study  Hausdorff operators  on the    Bergman spaces $A^{p}(\mathbb{U})$ of the upper half plane.
\end{abstract}

\section{introduction}
Given a  $\sigma$-finite positive Borel measure
$\mu$ on $ (0,\infty)$, the associated  Hausdorff operator $\mathcal{H}_\mu$,
 for suitable functions $f$ is given by
\begin{equation}\label{Df Hausd 2}
\mathcal{H}_{\mu}(f)(z):=\int_{0}^{\infty}\frac{1}{t}f\left(\frac{z}{t}\right)\,d\mu(t),\quad z\in \mathbb{U}
\end{equation}
where $\mathbb{U}=\{z\in\mathbb{C}: \mbox{Im}\,z>0\}$ is the upper half plane.
Its formal adjoint, the quasi-Hausdorff operator $\mathcal{H}^{*}_{\mu}$ in the case of real Hardy spaces $H^p(\mathbb{R})$ is
\begin{equation}\label{Df Hausd 1}
\mathcal{H}_{\mu}^{*}(f)(z):=\int_{0}^{\infty}f(tz)\,d\mu(t).
\end{equation}

Moreover for appropriate  functions $f$ and measures $\mu$ they satisfy the fundamental identity:

\begin{equation}\label{Df Hausd 3}
\widehat{\mathcal{H}_{\mu}(f)}(x)=\int_{0}^{\infty}\widehat{f}(tx)\, d\mu(t)=
\mathcal{H}_{\mu}^{*}(\widehat{f})(x),\quad x\in\mathbb{R},
\end{equation}
where $\widehat{f}$ denotes the Fourier transform of $f$.

The  theory of Hausdorff summability of Fourier series
started with the paper of Hausdorff \cite{Ha21} in 1921. Much later
 Hausdorff summability of power series of analytic functions  was considered in \cite{Si87} and \cite{Si90} on composition operators and the Ces\'{a}ro means in  Hardy $H^p$ spaces. General Hausdorff matrices were
considered in \cite{GaSi01} and \cite{GaPa06}. In \cite{GaPa06} the authors studied Hausdorff matrices on a large class of analytic function spaces such as Hardy spaces, Bergman spaces, BMOA, Bloch  etc. They  characterized those    Hausdorff matrices which induce bounded operators on these spaces.\\

Results on Hausdorff operators on spaces of analytic functions were extended in the Fourier
transform setting on the real line, starting with \cite{LiMo00} and \cite{Ka01}. There are
many classical operators in analysis which are special cases of the Hausdorff operator
if one chooses suitable measures $\mu$ such as the classical Hardy operator, its
adjoint operator, the Ces\'{a}ro type operators and the Riemann-Liouville fractional integral
operator. See the survey article \cite{Li013} and the references there in. In recent years, there
is an increasing interest on the study of boundedness of the Hausdorff operator on the
real Hardy spaces and Lebesque spaces (see for example \cite{An03}, \cite{BaGo19}, \cite{FaLi14},  \cite{LiMo01} and \cite{HuKyQu18}).

Motivated by the paper of Hung et al. \cite{HuKyQu18} we  describe the measures $\mu$ that will induce bounded operators on the Bergman spaces $A^{p}(\mathbb{U})$ of the upper half-plane. Next Theorem summarizes the main results  (see Theorems \ref{B Haus Berg} and \ref{Norm Haus Berg} ):
\begin{Th}
Let $1\leq p< \infty$ and $\mu$ be an $\sigma$-finite positive measure on $(0,\infty)$. The Hausdorff operator $\mathcal{H}_{\mu}$ is bounded on $A^{p}(\mathbb{U})$ if and only if
$$
\int_{0}^{\infty}\frac{1}{t^{1-\frac{2}{p}}}\,d\mu(t)<\infty.
$$
Moreover
$$
||\mathcal{H}_{\mu}||_{A^{p}(\mathbb{U})\to A^{p}(\mathbb{U})}=\int_{0}^{\infty}\frac{1}{t^{1-\frac{2}{p}}}\,d\mu(t).
$$

\end{Th}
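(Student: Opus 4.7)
The operator $\mathcal{H}_\mu$ commutes with the dilation semigroup $D_tf(z):=f(z/t)$, and its formal eigenfunction $z^{-2/p}$ (principal branch on $\mathbb{U}$) would yield $\mathcal{H}_\mu z^{-2/p}=I_\mu\cdot z^{-2/p}$ with $I_\mu:=\int_0^\infty t^{2/p-1}\,d\mu(t)$. Since $z^{-2/p}\notin A^p(\mathbb{U})$, the plan is to (i) obtain the upper bound from Minkowski's integral inequality and (ii) saturate it by testing against a family of regularized analytic approximate eigenfunctions.

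The upper bound is immediate: the substitution $w=z/t$ gives $\|f(\cdot/t)\|_{A^p}^p=t^2\|f\|_{A^p}^p$, whence
\[
\|\mathcal{H}_\mu f\|_{A^p}\le\int_0^\infty\Bigl\|\tfrac{1}{t}f(\cdot/t)\Bigr\|_{A^p}\,d\mu(t)=I_\mu\|f\|_{A^p}.
\]

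For the lower bound and the necessity I take
\[
f_\epsilon(z):=z^{-2/p+\epsilon}(z+i)^{-2\epsilon},\qquad\epsilon>0,
\]
analytic on $\mathbb{U}$ via the principal branches. A polar computation with the substitutions $r=e^u$ and then $v=\epsilon p u$ yields $\|f_\epsilon\|_{A^p}^p=\frac{2\pi}{\epsilon p}(1+o(1))$ as $\epsilon\to 0^+$, while a direct change of variables gives $\mathcal{H}_\mu f_\epsilon=f_\epsilon\cdot K_\epsilon$ with
\[
K_\epsilon(z)=\int_0^\infty t^{2/p-1}\,W_t(z)^\epsilon\,d\mu(t),\qquad W_t(z):=\frac{t(z+i)^2}{(z+it)^2}.
\]
For $\mu$ of compact support in $(0,\infty)$, $W_t(z)$ stays in a fixed compact subset of $(0,\infty)$ uniformly in $z\in\mathbb{U}$, so $K_\epsilon\to I_\mu$ uniformly on $\mathbb{U}$ and therefore $\|\mathcal{H}_\mu f_\epsilon\|_{A^p}/\|f_\epsilon\|_{A^p}\to I_\mu$. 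The case of general $\mu$ with $I_\mu<\infty$ then follows from $\|\mathcal{H}_\mu-\mathcal{H}_{\mu_n}\|\le I_\mu-I_{\mu_n}\to 0$, where $\mu_n$ denotes the restriction of $\mu$ to $[1/n,n]$. When $I_\mu=+\infty$, the integrand $t^{2/p-1}W_t(iy)^\epsilon$ is real and nonnegative on the imaginary axis, so Fatou's lemma forces $K_\epsilon(iy)\to+\infty$ for every $y>0$; a subharmonic mean-value estimate on a sequence of disjoint disks $D(i2^n,2^{n-2})$ then yields $\|\mathcal{H}_\mu f_\epsilon\|_{A^p}=+\infty$ for sufficiently small $\epsilon$, contradicting boundedness.

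The main technical difficulty is controlling $K_\epsilon$ on all scales in $\mathbb{U}$ as $\epsilon\to 0^+$, since the mass of $|f_\epsilon|^p$ spreads across all scales: the compact-support case admits uniform bounds directly, the general case with $I_\mu<\infty$ reduces to it by truncation, and the divergent case $I_\mu=+\infty$ relies on subharmonic comparison along the imaginary axis.
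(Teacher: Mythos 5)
Your upper bound, your multiplier identity $\mathcal{H}_\mu f_\epsilon=f_\epsilon K_\epsilon$, and your computation of the norm under the hypothesis $I_\mu<\infty$ (compactly supported $\mu$ first, then truncation) are all sound, and this part is in fact a cleaner route than the paper's: the paper uses $f_\varepsilon(z)=(z+\varepsilon i)^{-2/p-\varepsilon}$, which does not factor through a multiplier, and so has to control the phase of $\mathrm{Re}\,f_\varepsilon$ or $\mathrm{Im}\,f_\varepsilon$ on sectors (its Lemma 3.3) and separately estimate a truncated operator. Your choice $z^{-2/p+\epsilon}(z+i)^{-2\epsilon}$ makes $W_t(iy)>0$ on the imaginary axis and renders all that phase bookkeeping unnecessary.

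The necessity direction, however, has a genuine gap. Fatou gives $K_\epsilon(iy)\to+\infty$ only as $\epsilon\to 0^+$; for \emph{fixed} $\epsilon>0$ the damping factor $W_t(iy)^\epsilon\asymp\min(t,1/t)^{\epsilon}$ can make $K_\epsilon(iy)$ finite even when $I_\mu=+\infty$, and your claim that $\|\mathcal{H}_\mu f_\epsilon\|_{A^p}=+\infty$ for sufficiently small $\epsilon$ is false in general. Concretely, take $p>1$ and $d\mu(t)=t^{-2/p}(\ln t)^{-1}\,dt$ on $(e,\infty)$, so $I_\mu=\int_e^\infty (t\ln t)^{-1}dt=\infty$; then $K_\epsilon(iy)\asymp y^{\epsilon}/(\epsilon\ln y)$ for large $y$, and
\[
\int_{\mathbb{U}}|f_\epsilon K_\epsilon|^p\,dA\lesssim \frac{1}{\epsilon^p}\int_e^\infty\frac{dr}{r(\ln r)^p}<\infty ,
\]
so $\mathcal{H}_\mu f_\epsilon\in A^p(\mathbb{U})$ for every $\epsilon>0$. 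The unboundedness is visible only in the \emph{ratio} $\|\mathcal{H}_\mu f_\epsilon\|_{A^p}/\|f_\epsilon\|_{A^p}$ as $\epsilon\to0^+$, and since $\|f_\epsilon\|_{A^p}^p\asymp(\epsilon p)^{-1}$ itself blows up, a pointwise estimate at a single point (or at finitely many disks) cannot conclude. What is needed — and what the paper's Lemma 3.4 supplies by integrating over a truncated sector — is a quantitative lower bound of the form $\|\mathcal{H}_\mu f_\epsilon\|_{A^p}^p\geq k(p)\bigl(\int_0^{1/\epsilon}t^{2/p-1+\epsilon}\,d\mu(t)\bigr)^p\frac{1}{p\epsilon}$, which after dividing by $\|f_\epsilon\|_{A^p}^p$ and letting $\epsilon\to0$ forces $I_\mu\lesssim\|\mathcal{H}_\mu\|$. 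Your disjoint disks $D(i2^n,2^{n-2})$ can be repaired to achieve this: summing the sub-mean-value inequality over all $n\in\mathbb{Z}$ gives $\|\mathcal{H}_\mu f_\epsilon\|_{A^p}^p\gtrsim\sum_n 2^{-|n|\epsilon p}M_n^p$ with $M_n=\int_{2^{-|n|}}^{2^{|n|}}t^{2/p-1}d\mu(t)$, and since the weights $2^{-|n|\epsilon p}$ have total mass $\asymp(\epsilon p)^{-1}\asymp\|f_\epsilon\|_{A^p}^p$ and concentrate on $|n|\lesssim(\epsilon p)^{-1}$, the quotient is bounded below by a weighted average of $M_n^p$ that tends to $I_\mu^p$ as $\epsilon\to0$. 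But this Abelian step is the actual content of the necessity proof, and as written your argument replaces it with an assertion that is not true.
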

\section{preliminaries}

To define  single-valued functions, the principal value of the argument it is chosen to
be in the interval $(-\pi, \pi]$.  For $1\leq p < \infty$, we denote by $L^{p}(dA)$
the Banach space of all measurable functions on $\mathbb{U}$ such that
$$
||f||_{L^{p}(dA_{a})}:=\left(\frac{1}{\pi}\int_{\mathbb{U}}|f|^p dA\right)^{1/p}<\infty,
$$
where $dA$ is the area measure.
The  Bergman space $A^{p}(\mathbb{U})$  consists  of all
holomorphic functions $f$ on $\mathbb{U}$ that belong to $L^{p}(dA)$.
Sub-harmonicity yields a constant $C> 0$ such that
\begin{equation}\label{Grouth in A(p,a)}
|f(z)|^{p} \leq \frac{C}{(\mbox{Im}(z))^{2}}||f||^{p}_{A^{p}(\mathbb{U})},\quad z\in \mathbb{U},
\end{equation}
for $f \in A^{p}(\mathbb{U})$ and
$$
\lim_{z\to \partial{\widehat{\mathbb{U}}}} (\mbox{Im}(z))^{2}|f(z)|^p = 0
$$
for functions $A^{p}(\mathbb{U})$, where $\widehat{\mathbb{U}} :=
\overline{\mathbb{U}}\cup\{\infty\}$ (see \cite{ChKoSm17}). In particular, this
shows that each point evaluation is a continuous linear functional on
$A^{p}(\mathbb{U})$.

The duality properties of Bergman spaces are well known in literature see\cite{Zh90} and \cite{BaBoMiMi16}.
It is proved that for $1 < p < \infty$, $\frac{1}{p}+\frac{1}{q}=1$, the dual space of the Bergman space $A^{p}(\mathbb{U})$ is
$(A^{p}(\mathbb{U}))^{*}\sim A^{q}(\mathbb{U})$ under the duality pairing,
$$
\langle f,g\rangle= \frac{1}{\pi}\int_{\mathbb{U}}f(z)\overline{g(z)}\,dA(z).
$$

\section{Main results}
In what follows, unless otherwise stated,  $\mu$ is a positive $\sigma$-finite
measure on $(0,\infty)$. We start by  giving a condition under which
$\mathcal{H}_{\mu}$ is well defined.

\begin{Lem}\label{Well defined lemma 1}
Let $1\leq p<\infty$ and $f\in A^{p}(\mathbb{U})$. If $
\int_{0}^{\infty}\frac{1}{t^{1-\frac{2}{p}}}\,d\mu(t)<\infty, $ then
$$
\mathcal{H}_{\mu}(f)(z)=\int_{0}^{\infty}\frac{1}{t}f\left(\frac{z}{t}\right)\,d\mu(t)
$$
 is a well defined holomorphic function on $\mathbb{U}$.
\end{Lem}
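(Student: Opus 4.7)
The plan is to use the pointwise growth estimate \eqref{Grouth in A(p,a)} to dominate the integrand uniformly on compact subsets of $\mathbb{U}$, then invoke a standard theorem on analytic dependence on a parameter.

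First I would apply \eqref{Grouth in A(p,a)} to the dilated argument $z/t$. Since $t>0$ is real we have $\mbox{Im}(z/t)=\mbox{Im}(z)/t$, and therefore
$$
\left|f\!\left(\frac{z}{t}\right)\right| \le \frac{C^{1/p}}{(\mbox{Im}(z/t))^{2/p}}\,\n{f}_{A^{p}(\mathbb{U})}
= \frac{C^{1/p}\,t^{2/p}}{(\mbox{Im}\,z)^{2/p}}\,\n{f}_{A^{p}(\mathbb{U})}.
$$
Dividing by $t$ produces the key pointwise bound
$$
\frac{1}{t}\left|f\!\left(\frac{z}{t}\right)\right|
\le \frac{C^{1/p}\,\n{f}_{A^{p}(\mathbb{U})}}{(\mbox{Im}\,z)^{2/p}}\cdot \frac{1}{t^{1-2/p}},
$$
in which the $t$-dependence is exactly the one controlled by the hypothesis $\int_0^\infty t^{-(1-2/p)}\,d\mu(t)<\infty$. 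Integrating against $\mu$ gives absolute convergence of the defining integral at every $z\in\mathbb{U}$, together with the quantitative estimate
$$
\int_0^\infty \frac{1}{t}\left|f\!\left(\frac{z}{t}\right)\right|\,d\mu(t)
\le \frac{C^{1/p}\,\n{f}_{A^{p}(\mathbb{U})}}{(\mbox{Im}\,z)^{2/p}}\int_0^\infty \frac{d\mu(t)}{t^{1-2/p}}.
$$

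Next I would upgrade pointwise absolute convergence to local uniform convergence. On any compact set $K\subset \mathbb{U}$ the quantity $\mbox{Im}\,z$ is bounded below by some $\delta_K>0$, so the same bound with $(\mbox{Im}\,z)^{2/p}$ replaced by $\delta_K^{2/p}$ provides a $\mu$-integrable majorant that is independent of $z\in K$. For each fixed $t>0$ the map $z\mapsto t^{-1}f(z/t)$ is holomorphic on $\mathbb{U}$, so the integrand is $\mu$-measurable in $t$ and holomorphic in $z$.

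Finally I would invoke the standard analyticity-of-parameter-integrals result: a Morera plus Fubini argument. For any closed triangle $\Delta\subset\mathbb{U}$, the uniform $\mu$-integrable majorant on the compact set $\partial\Delta$ justifies interchanging $\int_{\partial\Delta}$ and $\int_0^\infty d\mu(t)$, and Cauchy's theorem applied to each fiber gives $\int_{\partial\Delta}\mathcal{H}_\mu(f)(z)\,dz=0$; Morera then yields holomorphy, while continuity is immediate from the dominated convergence theorem.

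There is no real obstacle; the only point to be careful about is that $t>0$ is real, so that $\mbox{Im}(z/t)=\mbox{Im}(z)/t$ and the growth estimate can be applied directly. The computation of the exponent $1-2/p$ on $t$ emerging naturally from \eqref{Grouth in A(p,a)} is what makes the hypothesis of the lemma the correct one.
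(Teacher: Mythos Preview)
Your proposal is correct and follows essentially the same approach as the paper: apply the pointwise growth estimate \eqref{Grouth in A(p,a)} to $f(z/t)$, using $\mbox{Im}(z/t)=\mbox{Im}(z)/t$, to obtain absolute convergence of the defining integral with the bound $|\mathcal{H}_\mu(f)(z)|\le C\,||f||_{A^p(\mathbb{U})}(\mbox{Im}\,z)^{-2/p}\int_0^\infty t^{-(1-2/p)}\,d\mu(t)$. The paper simply asserts holomorphy from absolute convergence, whereas you spell out the local uniform majorant and the Morera--Fubini argument; this is the same proof with the standard details made explicit.
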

\begin{proof}
 For $f\in A^{p}(\mathbb{U})$ using (\ref{Grouth in A(p,a)}) we have

\begin{align*}
|\mathcal{H}_{\mu}(f)(z)|&\leq \int_{0}^{\infty}\frac{1}{t}\left|f\left(\frac{z}{t}\right)\right|\,d\mu(t)\\
&\leq  C\,\frac{||f||_{A^{p}(\mathbb{U})}}{\mbox{Im}(z)^{\frac{2}{p}}}\int_{0}^{\infty}\frac{1}{t^{1-\frac{2}{p}}}\,d\mu(t)<\infty.
\end{align*}
Thus $\mathcal{H}_{\mu}f$ is well defined, and is given by an absolutely convergent integral, so it is holomorphic.
\end{proof}

\begin{Lem}\label{Grouth f(e)}
Let $\lambda>0$  and $\delta>0$. If $g_{\lambda, \delta}(z)=|z+\delta i|^{-\frac{2+\lambda}{p}}$,  then
$$
\left(\frac{1}{2}\right)^{2+\lambda} \cdot \frac{1}{\lambda \delta^{\lambda}}\leq ||g_{\lambda,\delta}||_{L^{p}(dA)}^{p}\leq2^{\frac{2+\lambda}{2}} \cdot\frac{1}{\lambda\delta^{\lambda}}
$$
\end{Lem}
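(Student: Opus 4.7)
My plan is to compute $\|g_{\lambda,\delta}\|_{L^{p}(dA)}^{p}$ essentially in closed form by translating the centre of symmetry of the weight to the origin and then switching to polar coordinates. Set $w = z+\delta i$; this is an isometry of area measure, and the weight $|z+\delta i|^{-(2+\lambda)/p}$ becomes purely radial in $w$, so the integral should factor neatly. The image of $\mathbb{U}$ under $w=z+\delta i$ is the half-plane $\{\operatorname{Im}(w)>\delta\}$, and writing $w=re^{i\theta}$ this region is exactly $\{(r,\theta):0<\theta<\pi,\ r>\delta/\sin\theta\}$.

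With this substitution I expect the integral to collapse as follows:
\begin{align*}
\|g_{\lambda,\delta}\|_{L^{p}(dA)}^{p}
&= \frac{1}{\pi}\int_{0}^{\pi}\!\!\int_{\delta/\sin\theta}^{\infty} r^{-(2+\lambda)}\, r\, dr\, d\theta \\
&= \frac{1}{\pi\lambda\,\delta^{\lambda}}\int_{0}^{\pi}\sin^{\lambda}\theta\, d\theta .
\end{align*}
Setting $S_{\lambda}:=\int_{0}^{\pi}\sin^{\lambda}\theta\, d\theta$, the lemma reduces to the elementary two-sided bound
\[
\pi\cdot 2^{-(2+\lambda)}\ \leq\ S_{\lambda}\ \leq\ \pi\cdot 2^{(2+\lambda)/2}.
\]

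Both halves of this inequality are immediate. The upper bound uses only $\sin\theta\leq 1$, giving $S_{\lambda}\leq \pi\leq \pi\cdot 2^{(2+\lambda)/2}$. For the lower bound I would restrict to $[\pi/4,3\pi/4]$, on which $\sin\theta\geq 1/\sqrt{2}$, so
\[
S_{\lambda}\ \geq\ \frac{\pi}{2}\cdot 2^{-\lambda/2}\ =\ \pi\cdot 2^{-1-\lambda/2},
\]
and since $-1-\lambda/2\geq -2-\lambda$ for every $\lambda>0$, this is at least $\pi\cdot 2^{-(2+\lambda)}$.

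There is no real obstacle. The only point requiring a little care is identifying the polar region correctly: the constraint $\operatorname{Im}(z)>0$ translates to $r\sin\theta>\delta$, which both confines $\theta$ to $(0,\pi)$ (so that $\sin\theta>0$) and produces the lower radial limit $r>\delta/\sin\theta$; once that is in place, Fubini gives the clean factorisation displayed above and the two crude estimates of $S_{\lambda}$ finish the proof.
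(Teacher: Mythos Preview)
Your argument is correct, and it is a genuinely different and cleaner route than the paper's. The paper keeps polar coordinates centred at the origin, so the integrand is $(r^{2}+\delta^{2}+2r\delta\sin\theta)^{-(2+\lambda)/2}\,r$, and it then squeezes this expression between powers of $r+\delta$ via $(r+\delta)^{2}\le 2(r^{2}+\delta^{2})$ for the upper bound and $r^{2}+\delta^{2}+2r\delta\sin\theta\le (r+\delta)^{2}$ followed by a restriction to $r\ge\delta$ for the lower bound. You instead translate first so that the weight becomes radial in $w=z+\delta i$; the integral then evaluates \emph{exactly} to $\dfrac{1}{\pi\lambda\delta^{\lambda}}\,S_{\lambda}$ with $S_{\lambda}=\int_{0}^{\pi}\sin^{\lambda}\theta\,d\theta$, and only crude bounds on $S_{\lambda}$ are needed. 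What your approach buys is a sharper statement (indeed $S_{\lambda}\le\pi$ already beats the lemma's upper constant $2^{(2+\lambda)/2}$, and $S_{\lambda}=\sqrt{\pi}\,\Gamma\!\big(\tfrac{\lambda+1}{2}\big)/\Gamma\!\big(\tfrac{\lambda}{2}+1\big)$ if one wants it); what the paper's approach buys is that it avoids tracking the image region under the shift, at the cost of carrying the mixed term through a couple of elementary inequalities.
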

\begin{proof}
Using polar coordinates for the integral over $\mathbb{U}$ we find
\begin{align*}
||g_{\lambda,\delta}||_{L^{p}(dA)}^{p}
&=\frac{1}{\pi}\int_{\mathbb{U}}\left|\frac{1}{z+ \delta i}\right|^{2+\lambda}dA(z)\\
&=\frac{1}{\pi}\int_{0}^{\pi}\int_{0}^{\infty}\left(\frac{1}{r^{2}+\delta^{2}+2r\delta\sin(\theta)}\right)^{\frac{2+\lambda}{2}}r\,drd\theta.\\
\end{align*}
Denote by $I$ the last double integral. Then
\begin{align*}
I&\leq   \int_{0}^{\infty}\left(\frac{1}{r^{2}+\delta^{2}}\right)^{\frac{2+\lambda}{2}}rdr\\
&\leq 2^{\frac{2+\lambda}{2}} \int_{0}^{\infty}\left(\frac{1}{r+\delta}\right)^{2+\lambda}(r+\delta)dr\\
&=2^{\frac{2+\lambda}{2}}\frac{1}{\lambda\delta^{\lambda}}.
\end{align*}
On the other hand,
\begin{align*}
I&\geq   \int_{0}^{\infty}\left(\frac{1}{r+\delta}\right)^{2+\lambda}rdr\\
&\geq  \int_{\delta}^{\infty}\left(\frac{1}{r+\delta}\right)^{2+\lambda}rdr\\
&\geq \left(\frac{1}{2}\right)^{2+\lambda} \int_{\delta}^{\infty}\left(\frac{1}{r}\right)^{2+\lambda}rdr\\
&=\left(\frac{1}{2}\right)^{2+\lambda} \frac{1}{\lambda \delta^{\lambda}},
\end{align*}
and the assertion follows.
\end{proof}

\subsection{Test functions} We now  consider the test functions which are defined as follows. Let
$z=x+iy\in \mathbb{U}$ and
$$
\varphi_{\varepsilon}(z)=\frac{\overline{z+\varepsilon i}}{|z+\varepsilon i|}
=\frac{x-i(y+\varepsilon)}{\sqrt{x^2+(y+\varepsilon)^2}}
$$
and
$$
f_{\varepsilon}(z)=\frac{1}{(z+ \varepsilon i)^{\frac{2}{p}+\varepsilon}},
$$
with $\varepsilon>0$  small enough. Note that, $|f_{\varepsilon}|\equiv
g_{p\varepsilon,\varepsilon}$ with respect to the notation of Lemma \ref{Grouth
f(e)}, and that $\varphi_{\varepsilon}(z)$  lies on the unit circle with
$-\pi<\arg(\varphi_{\varepsilon}(z))< 0$, and the following identity
holds
\begin{align*}
f_{\varepsilon}(z)=\varphi_{\varepsilon}(z)^{\frac{2}{p}+\varepsilon}|f_{\varepsilon}(z)|.
\end{align*}

Let $a,b \in (-\pi,\pi]$ and set  $A_{[a,b]}= \{ z\in\mathbb{U}: a\leq \arg(z)\leq b\}$, with obvious modifications   in the case of $A_{(a,b)}, A_{(a,b]}$ and $A_{[a,b)}$.

\begin{Lem}\label{Grouth f(e) 1}
The following holds:\\

\noindent $(i)$ If $2<p<\infty$ and $\frac{2}{p}+\varepsilon\leq1$, then
\begin{equation*}
|\mbox{Re}\,f_{\varepsilon}(z)|\geq |\mbox{Re}\,\varphi_{\varepsilon}(z)|
|f_{\varepsilon}(z)|
\end{equation*}
for every $z\in A_{(0,\frac{\pi}{2}]}$.\\

\noindent $(ii)$ If $1<p\leq 2$ and $1<\frac{2}{p}+\varepsilon<2$, then
\begin{equation*}
|\mbox{Im}\,f_{\varepsilon}(z)|> C(p)|\mbox{Im}\,\varphi_{\varepsilon}(z)|
|f_{\varepsilon}(z)|
\end{equation*}
for every $z\in A_{[\frac{\pi}{4},\frac{\pi}{2}]}$.\\

\noindent $(ii)$ If $p=1$, $0<\theta_0<\frac{\pi}{16}$ and $(2+\varepsilon)(\frac{\pi}{2}+\theta_0)<\frac{5\pi}{4}$, then
\begin{equation*}
|\mbox{Re}\,f_{\varepsilon}(z)|> |\mbox{Re}\,\varphi_{\varepsilon}(z)|
|f_{\varepsilon}(z)|
\end{equation*}
for every $z\in A_{[\frac{\pi}{2},\frac{\pi}{2}+\theta_{0}]}$
\end{Lem}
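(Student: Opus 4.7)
The plan is to reduce all three inequalities to elementary trigonometric comparisons. Writing $\varphi_\varepsilon(z) = e^{i\alpha(z)}$ with $\alpha(z):=-\arg(z+\varepsilon i)\in(-\pi,0)$ and setting $\beta:=\frac{2}{p}+\varepsilon$, the identity $f_\varepsilon = \varphi_\varepsilon^{\beta}|f_\varepsilon|$ gives
\begin{equation*}
\mathrm{Re}\,f_\varepsilon(z) = \cos(\alpha\beta)\,|f_\varepsilon(z)|,\qquad \mathrm{Im}\,f_\varepsilon(z) = \sin(\alpha\beta)\,|f_\varepsilon(z)|,
\end{equation*}
together with $\mathrm{Re}\,\varphi_\varepsilon(z)=\cos\alpha$ and $\mathrm{Im}\,\varphi_\varepsilon(z)=\sin\alpha$. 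Dividing each claim by $|f_\varepsilon(z)|$, it reduces to a comparison between $|\cos(\alpha\beta)|$ (resp.\ $|\sin(\alpha\beta)|$) and $|\cos\alpha|$ (resp.\ $|\sin\alpha|$) as $\alpha$ varies in a prescribed arc.

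Next I would pin down that arc in each case. For $z=x+iy\in\mathbb{U}$, adding $\varepsilon i$ leaves the real part unchanged and rotates the point toward the positive imaginary axis, so a direct check shows: $z\in A_{(0,\pi/2]}$ forces $|\alpha|\in(0,\pi/2]$; $z\in A_{[\pi/4,\pi/2]}$ forces $|\alpha|\in[\pi/4,\pi/2]$; and $z\in A_{[\pi/2,\pi/2+\theta_0]}$ forces $|\alpha|\in[\pi/2,\pi/2+\theta_0]$. With these ranges, part $(i)$ is immediate: $\beta\le 1$ gives $|\alpha\beta|\le|\alpha|\le\pi/2$, hence $\cos(\alpha\beta)\ge\cos\alpha\ge 0$ by the monotonicity of $\cos$ on $[0,\pi/2]$. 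Part $(iii)$ reduces to a numerical check: the hypothesis $(2+\varepsilon)(\pi/2+\theta_0)<5\pi/4$ places $\beta|\alpha|$ in $[\pi+\varepsilon\pi/2,\,5\pi/4)$, whence $|\cos(\alpha\beta)|>\sqrt{2}/2$, while $|\alpha|\in[\pi/2,\pi/2+\theta_0]$ forces $|\cos\alpha|\le\sin\theta_0<\sin(\pi/16)<\sqrt{2}/2$.

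Part $(ii)$ is the main obstacle because a positive constant $C(p)$ must be exhibited even though $\beta|\alpha|$ may approach $\pi$ when $p\to 1^+$. The decisive hypothesis is $\beta<2$, which confines $\beta|\alpha|$ to the compact subinterval $[\beta\pi/4,\beta\pi/2]\subset(\pi/4,\pi)$; on this interval $\sin$ attains a strictly positive minimum $m(p):=\min(\sin(\beta\pi/4),\sin(\beta\pi/2))>0$ at one of the endpoints. Since $|\sin\alpha|\le 1$, any $C(p)<m(p)$ (for instance $C(p)=m(p)/2$) yields the strict inequality $|\sin(\alpha\beta)|>C(p)|\sin\alpha|$ uniformly on $A_{[\pi/4,\pi/2]}$. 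This uniform lower bound is precisely the estimate needed to handle the range $1<p\le 2$ in the subsequent norm computation for $\mathcal{H}_\mu$.
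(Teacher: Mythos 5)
Your proposal is correct and follows essentially the same route as the paper: write $f_\varepsilon=\varphi_\varepsilon^{\beta}\lvert f_\varepsilon\rvert$, locate $\alpha=\arg\varphi_\varepsilon(z)$ in the relevant arc, and compare $\lvert\cos(\beta\alpha)\rvert$ with $\lvert\cos\alpha\rvert$ (resp.\ $\lvert\sin(\beta\alpha)\rvert$ with $\lvert\sin\alpha\rvert$); your part $(iii)$ even makes explicit the numerical step $\lvert\cos(\beta\alpha)\rvert>\sqrt{2}/2>\sin(\pi/16)\ge\lvert\cos\alpha\rvert$ that the paper only asserts. The one point to tighten is in $(ii)$: your $m(p)=\min\bigl(\sin(\beta\pi/4),\sin(\beta\pi/2)\bigr)$ actually depends on $\varepsilon$ through $\beta=\frac{2}{p}+\varepsilon$, and the later norm estimate needs a constant uniform in $\varepsilon$, so either replace $\beta$ by a fixed $a(p)$ with $\frac{2}{p}+\varepsilon<a(p)<2$ as the paper does, or observe that $m$ tends to the positive limit $\min\bigl(\sin(\pi/(2p)),\sin(\pi/p)\bigr)$ as $\varepsilon\to 0$ and is therefore bounded below on $(0,\varepsilon(p)]$.
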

\begin{proof}

Taking real and imaginary parts we have
\begin{align*}
\mbox{Re}\,f_{\varepsilon}(z)=|f_{\varepsilon}(z)|\mbox{Re}\,\varphi_{\varepsilon}(z)^{\frac{2}{p}+\varepsilon}=
|f_{\varepsilon}(z)|\cos((\frac{2}{p}+\varepsilon)\theta)
\end{align*}
and
\begin{align*}
\mbox{Im}\,f_{\varepsilon}(z)=|f_{\varepsilon}(z)|\mbox{Im}\,\varphi_{\varepsilon}(z)^{\frac{2}{p}+\varepsilon}=
|f_{\varepsilon}(z)|\sin((\frac{2}{p}+\varepsilon)\theta),
\end{align*}
where $\theta:=\theta(z,\varepsilon)=\arg\varphi_{\varepsilon}(z)$. \\

\noindent $(i)$: It easy is see that
\begin{align*}
|\mbox{Re}\,f_{\varepsilon}(z)|&=|f_{\varepsilon}(z)|\cos((\frac{2}{p}+\varepsilon)\theta)\\
&\geq |f_{\varepsilon}(z)|\cos(\theta)=|\mbox{Re}\,\varphi_{\varepsilon}(z)|
|f_{\varepsilon}(z)|.
\end{align*}

\noindent $(ii)$:  Let $a=a(p)>0$ such that $1<\frac{2}{p}+\varepsilon<a<2$. Since $z\in A_{[\frac{\pi}{4}, \frac{\pi}{2}]}$  simple geometric arguments imply  that $\theta \in [-\frac{\pi}{2}, -\frac{\pi}{4})$. Moreover $-\pi<- \frac{a\pi}{2}<(\frac{2}{p}+\varepsilon) \theta < -\frac{\pi}{4}$. This implies that
$$
\min\{\sin(\frac{a\pi}{2}),\frac{\sqrt{2}}{2}\}<\left|\frac{\sin((\frac{2}{p}+\varepsilon) \theta)}{\sin(\theta)}\right|< \sqrt{2}
$$
for every $z\in  A_{[\frac{\pi}{4}, \frac{\pi}{2}]}$. We calculate
\begin{align*}
&|\mbox{Im}\,f_{\varepsilon}(z)|=|f_{\varepsilon}(z)||\sin((\frac{2}{p}+\varepsilon)\theta)|> \min\{\sin(\frac{a\pi}{2}),\frac{\sqrt{2}}{2}\} |f_{\varepsilon}(z)||\sin(\theta)|\\
&= \min\{\sin(\frac{a\pi}{2}),\frac{\sqrt{2}}{2}\}|\mbox{Im}\,\varphi_{\varepsilon}(z)|
|f_{\varepsilon}(z)|.
\end{align*}
This proves $(ii)$ with $C(p)=\min\{\sin(\frac{a(p)\pi}{2}),\frac{\sqrt{2}}{2}\}$.\\

\noindent $(iii)$: Since $z\in A_{[\frac{\pi}{2},\frac{\pi}{2}+\theta_{0}]}$  we have that $\theta\in (-\frac{\pi}{2}-\theta_{0}, -\frac{\pi}{2}]$. Thus
$$
-\frac{5\pi}{4}<-(\frac{\pi}{2}+\theta_0)(2+\varepsilon)< (2+\varepsilon)\theta\leq -\frac{\pi}{2}(2+\varepsilon)<-\pi.
$$
This implies that $|\cos((2+\varepsilon)\theta)|>|\cos(\theta)|$ and therefore
\begin{align*}
|\mbox{Re}\,f_{\varepsilon}(z)|&>|\mbox{Re}\,\varphi_{\varepsilon}(z)|
|f_{\varepsilon}(z)|.
\end{align*}
\end{proof}

\subsection{Growth estimates}

Let $a,b \in (-\pi,\pi]$ and set
$$
S_{[a,b]}= \{ z\in\mathbb{U}: a\leq \arg(z)\leq b, \, |z|\geq1\},
$$
be a truncated sector with obvious modifications in the case of $S_{(a,b)}, S_{(a,b]}$ and $S_{[a,b)}$.
 Since $\mu$ is positive
 $$\mbox{Re}\, \mathcal{H}_{\mu}(f_{\varepsilon})=\mathcal{H}_{\mu}(\mbox{Re}\,f_{\varepsilon})\quad \mbox{and} \quad
 \mbox{Im}\, \mathcal{H}_{\mu}(f_{\varepsilon})=\mathcal{H}_{\mu}(\mbox{Im}\,f_{\varepsilon}).
 $$
 Note that if $\mbox{Re}\,f_{\varepsilon}$ or $\mbox{Im}\,f_{\varepsilon}$  have constant  sign on some sector $A$, then
 $$
|\mathcal{H}_{\mu}(\mbox{Re}\,f_{\varepsilon})(z)|=\mathcal{H}_{\mu}(|\mbox{Re}\,f_{\varepsilon}|)(z) \quad \mbox{and} \quad|\mathcal{H}_{\mu}(\mbox{Im}\,f_{\varepsilon})(z)|=\mathcal{H}_{\mu}(|\mbox{Im}\,f_{\varepsilon}|)(z)
$$
for every $z\in A$.
\begin{Lem}\label{Grouth Hf(e) 1}
Let  $1\leq p<\infty$ and suppose that $\mathcal{H}_{\mu}$ is bounded on
$A^{p}(\mathbb{U})$.  Then there are $\varepsilon(p)$ and $k(p)$ positive constants such that
$$
||\mathcal{H}_{\mu}(f_{\varepsilon})||_{A^{p}(\mathbb{U})}^{p}\geq k(p)\left(\int_{0}^{\frac{1}{\varepsilon}} \frac{1}{t^{1-\frac{2}{p}-\varepsilon}}d\mu(t)\right)^{p}\frac{1}{p\varepsilon},
$$
for every  $\varepsilon$ in $(0,\varepsilon(p)]$.
\end{Lem}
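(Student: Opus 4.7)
The plan is to produce a pointwise lower bound for $|\mathcal{H}_\mu(f_\varepsilon)(z)|$ on a suitable truncated sector and then integrate in polar coordinates. Lemma~\ref{Grouth f(e) 1} tells us which component of $f_\varepsilon$ and which sector to use, split into three cases. For $2<p<\infty$ (and $\varepsilon$ small enough that $\frac{2}{p}+\varepsilon\le 1$), I would take $S=S_{[\pi/6,\pi/3]}$ and work with $\mbox{Re}\,f_\varepsilon$ via part (i); for $1<p\le 2$, $S=S_{[\pi/4,\pi/2]}$ and $\mbox{Im}\,f_\varepsilon$ via part (ii); for $p=1$, $S=S_{[\pi/2,\pi/2+\theta_0]}$ and $\mbox{Re}\,f_\varepsilon$ via part (iii). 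On each such $S$ the chosen component has constant sign, so positivity of $\mu$ together with the identity $\mbox{Re}\,\mathcal{H}_\mu=\mathcal{H}_\mu\,\mbox{Re}$ (and its $\mbox{Im}$-analogue) gives
$$
|\mathcal{H}_\mu(f_\varepsilon)(z)|\ \geq\ \mathcal{H}_\mu(|\mbox{Re}\,f_\varepsilon|)(z)\qquad (z\in S),
$$
with the obvious modification in the middle case.

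For the pointwise estimate I would apply Lemma~\ref{Grouth f(e) 1} under the integral at $z/t$ (which lies in the same sector as $z$, since $t>0$ preserves argument) and rewrite everything in terms of $z$ via
$$
|\mbox{Re}\,\varphi_\varepsilon(z/t)|=\frac{|\mbox{Re}(z)|}{|z+t\varepsilon i|},\quad |\mbox{Im}\,\varphi_\varepsilon(z/t)|=\frac{\mbox{Im}(z)+t\varepsilon}{|z+t\varepsilon i|},\quad |f_\varepsilon(z/t)|=\frac{t^{\frac{2}{p}+\varepsilon}}{|z+t\varepsilon i|^{\frac{2}{p}+\varepsilon}}.
$$
Restricting to $t\in(0,1/\varepsilon]$ and $z\in S$ (so $|z|\ge 1$) forces $t\varepsilon\le 1\le |z|$ and hence $|z+t\varepsilon i|\le 2|z|$, while on the chosen sector the numerator $|\mbox{Re}(z)|$ or $\mbox{Im}(z)+t\varepsilon$ is at least $c|z|$ for some absolute $c>0$. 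Assembling these yields
$$
\mathcal{H}_\mu(|\mbox{Re}\,f_\varepsilon|)(z)\ \geq\ \frac{K_p}{|z|^{\frac{2}{p}+\varepsilon}}\int_0^{1/\varepsilon}\frac{d\mu(t)}{t^{1-\frac{2}{p}-\varepsilon}}\qquad(z\in S).
$$
Raising to the $p$-th power and integrating over $S$ in polar coordinates $z=re^{i\theta}$, the angular integral contributes a positive $p$-dependent constant and the radial integral is
$$
\int_1^\infty r^{1-p(\frac{2}{p}+\varepsilon)}\,dr=\int_1^\infty r^{-1-p\varepsilon}\,dr=\frac{1}{p\varepsilon},
$$
producing exactly the advertised $\frac{1}{p\varepsilon}$ factor.

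The subtle case is the endpoint $p=1$. A direct use of $|\mbox{Re}\,\varphi_\varepsilon|$ would introduce the factor $|\mbox{Re}(z)|$, which vanishes on the ray $\arg z=\pi/2$, i.e.\ on the boundary of $S_{[\pi/2,\pi/2+\theta_0]}$. The way around is to keep the full form $|\mbox{Re}\,f_\varepsilon(z/t)|=|\cos((2+\varepsilon)\theta)|\,|f_\varepsilon(z/t)|$ and exploit the sharpness of the angular restriction in Lemma~\ref{Grouth f(e) 1}(iii): for fixed $z\in A_{[\pi/2,\pi/2+\theta_0]}$ and $t\in(0,\infty)$, a direct computation shows that $\arg(z/t+\varepsilon i)$ only sweeps $[\pi/2,\arg z]\subset[\pi/2,\pi/2+\theta_0]$, so $(2+\varepsilon)\theta$ stays in a compact subset of $(-5\pi/4,-\pi)$, on which $|\cos((2+\varepsilon)\theta)|\ge \sqrt{2}/2$. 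This gives the cleaner bound $|\mbox{Re}\,f_\varepsilon(z/t)|\ge \frac{\sqrt{2}}{2}|f_\varepsilon(z/t)|$ with no degenerate prefactor, and the argument proceeds as before.
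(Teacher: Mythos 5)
Your proposal is correct and follows essentially the same route as the paper: reduce to one sign-definite component of $f_{\varepsilon}$ on a truncated sector via Lemma \ref{Grouth f(e) 1}, pass the modulus inside $\mathcal{H}_{\mu}$ by positivity of $\mu$, truncate to $t\le 1/\varepsilon$ so that $|z+t\varepsilon i|\le 2|z|$, and integrate in polar coordinates to extract the factor $\int_{1}^{\infty}r^{-1-p\varepsilon}dr=\frac{1}{p\varepsilon}$. The only deviations are cosmetic: the smaller sector $S_{[\pi/6,\pi/3]}$ in the first case, and for $p=1$ the direct bound $|\cos((2+\varepsilon)\theta)|\ge\frac{\sqrt{2}}{2}$ in place of the paper's comparison with $|\cos\theta|$ (whose vanishing at $\arg z=\frac{\pi}{2}$ is harmless anyway, since the angular integral $\int_{\pi/2}^{\pi/2+\theta_{0}}|\cos\theta|\,d\theta$ is still positive).
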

\begin{proof}
We will consider three cases for the range of $p$. Note that if $z$ is in a truncated sector $S$ then $z/t$ belongs to the corresponding  sector  $A$ for every $t>0$.  \\

\noindent \textbf{Case I}. Let $2<p<\infty$ and $\varepsilon(p)$ such that $\frac{2}{p}+\varepsilon(p)<1$.  Then for every $\varepsilon$ in $(0,\varepsilon(p)]$
\begin{align*}
||\mathcal{H}_{\mu}(f_{\varepsilon})||_{A^{p}(\mathbb{U})}^{p}&\geq ||\mbox{Re}\,\mathcal{H}_{\mu}(f_{\varepsilon})||_{A^{p}(\mathbb{U})}^{p}
=||\mathcal{H}_{\mu}(\mbox{Re}\, f_{\varepsilon})||_{A^{p}(\mathbb{U})}^{p}\\
&\geq\frac{1}{\pi}\int_{S_{(0,\frac{\pi}{2}]}}\left|\int_{0}^{\infty}\frac{1}{t}\mbox{Re}\, f_{\varepsilon}(z/t)\,d\mu(t)\right|^{p} dA(z)\\
&= \frac{1}{\pi}\int_{S_{(0,\frac{\pi}{2}]}}\left(\int_{0}^{\infty}\frac{1}{t}|\mbox{Re}\, f_{\varepsilon}(z/t)|\,d\mu(t)\right)^{p} dA(z).\\
\end{align*}
Denote by $I$ the last integral on $S_{(0,\frac{\pi}{2}]}$. 
 By $(i)$ of Lemma \ref{Grouth f(e) 1} we have
\begin{align*}
&I\geq \frac{1}{\pi}\int_{S_{(0,\frac{\pi}{2}]}}\left(\int_{0}^{\frac{1}{\varepsilon}}\frac{1}{t}|\mbox{Re}\, \varphi_{\varepsilon}(z/t)|\,|f_{\varepsilon}(z/t)|\,d\mu(t)\right)^{p} dA(z)\\
&=\frac{1}{\pi}\int_{S_{(0,\frac{\pi}{2}]}}\left(\int_{0}^{\frac{1}{\varepsilon}} \left(\frac{1}{\sqrt{x^2+ (y+t\varepsilon)^2}}\right)^{\frac{2}{p}+\varepsilon+1}\frac{d\mu(t)}{t^{1-\frac{2}{p}-\varepsilon}}\right)^{p} x^{p}dxdy.\\
\end{align*}
Using polar coordinates and noting that $t\varepsilon <|z|$ for $|z|\geq1$ and $t\leq \varepsilon^{-1}$, we have
\begin{align*}
&I\geq\int_{1}^{\infty} \int_{0}^{\frac{\pi}{2}}\left(\int_{0}^{\frac{1}{\varepsilon}} \left(\frac{1}{\sqrt{r^2+ 2rt\varepsilon\sin(\theta)+t^{2}\varepsilon^2}}\right)^{\frac{2}{p}+\varepsilon+1}\frac{d\mu(t)}{t^{1-\frac{2}{p}-\varepsilon}}\right)^{p}  r^{p+1}(\cos(\theta))^{p}\frac{d\theta}{\pi} dr\\
&\geq\int_{1}^{\infty} \int_{0}^{\frac{\pi}{2}}\left(\int_{0}^{\frac{1}{\varepsilon}} \left(\frac{1}{\sqrt{r^2+ 2rt\varepsilon+t^{2}\varepsilon^2}}\right)^{\frac{2}{p}+\varepsilon+1}\frac{d\mu(t)}{t^{1-\frac{2}{p}-\varepsilon}}\right)^{p}  r^{p+1}(\cos(\theta))^{p}\frac{d\theta}{\pi} dr\\
&\geq k(p)\left(\int_{0}^{\frac{1}{\varepsilon}} \frac{1}{t^{1-\frac{2}{p}-\varepsilon}}d\mu(t)\right)^{p}\int_{1}^{\infty}\frac{1}{r^{1+p\varepsilon}}dr\\
&=k(p)\left(\int_{0}^{\frac{1}{\varepsilon}} \frac{1}{t^{1-\frac{2}{p}-\varepsilon}}d\mu(t)\right)^{p}\frac{1}{p\varepsilon},
\end{align*}
where $k(p)=2^{-p(\varepsilon+1)}\int_{0}^{\frac{\pi}{2}}(\cos(\theta))^{p}\frac{d\theta}{4\pi}$.\\

\noindent \textbf{Case II}. Let $2<p<\infty$ and $\varepsilon(p)$ such that $1<\frac{2}{p}+\varepsilon(p)<2$.  Then for every $\varepsilon$ in $(0,\varepsilon(p)]$
\begin{align*}
||\mathcal{H}_{\mu}(f_{\varepsilon})||_{A^{p}(\mathbb{U})}^{p}&
\geq\frac{1}{\pi}\int_{S_{[\frac{\pi}{4},\frac{\pi}{2}]}}\left|\int_{0}^{\infty}\frac{1}{t}\mbox{Im}\, f_{\varepsilon}(z/t)\,d\mu(t)\right|^{p} dA(z)\\
&= \frac{1}{\pi}\int_{S_{[\frac{\pi}{4},\frac{\pi}{2}]}}\left(\int_{0}^{\infty}\frac{1}{t}|\mbox{Im}\, f_{\varepsilon}(z/t)|\,d\mu(t)\right)^{p} dA(z).\\
\end{align*}
Denote by $I$ the last integral on $S_{[\frac{\pi}{4},\frac{\pi}{2}]}$. 
By $(ii)$ of Lemma \ref{Grouth f(e) 1} we have
\begin{align*}
&I\geq \frac{C(p)}{\pi}\int_{S_{[\frac{\pi}{4},\frac{\pi}{2}]}}\left(\int_{0}^{\frac{1}{\varepsilon}}\frac{1}{t}|\mbox{Im}\, \varphi_{\varepsilon}(z/t)|\,|f_{\varepsilon}(z/t)|\,d\mu(t)\right)^{p} dA(z)\\
&\geq\frac{C(p)}{\pi}\int_{S_{[\frac{\pi}{4},\frac{\pi}{2}]}}\left(\int_{0}^{\frac{1}{\varepsilon}} \left(\frac{1}{\sqrt{x^2+ (y+t\varepsilon)^2}}\right)^{\frac{2}{p}+\varepsilon+1}\frac{d\mu(t)}{t^{1-\frac{2}{p}-\varepsilon}}\right)^{p} y^{p}dxdy.\\
\end{align*}
Using polar coordinates and working as is Case I, we arrive at the desired conclusion with constant $k(p)=C(p)2^{-p(\varepsilon+1)}\int_{\frac{\pi}{4}}^{\frac{\pi}{2}}(\sin(\theta))^{p}\frac{d\theta}{4\pi}$.\\

\noindent \textbf{Case III}. Let $p=1$ and $\theta_0$ as in Lemma \ref{Grouth f(e) 1}. Let $\varepsilon(1)$ such that $(2+\varepsilon(1))(\frac{\pi}{2}+\theta_{0})<\frac{5\pi}{4}$. Then for every $\varepsilon$ in $(0,\varepsilon(1)]$
\begin{align*}
||\mathcal{H}_{\mu}(f_{\varepsilon})||_{A^{1}(\mathbb{U})}&
\geq\frac{1}{\pi}\int_{S_{[\frac{\pi}{2},\frac{\pi}{2}+\theta_{0}]}}\left|\int_{0}^{\infty}\frac{1}{t}\mbox{Re}\, f_{\varepsilon}(z/t)\,d\mu(t)\right| dA(z)\\
&= \frac{1}{\pi}\int_{S_{[\frac{\pi}{2},\frac{\pi}{2}+\theta_{0}]}}\int_{0}^{\infty}\frac{1}{t}|\mbox{Re}\, f_{\varepsilon}(z/t)|\,d\mu(t)\, dA(z).\\
\end{align*}
Denote by $I$ the last integral on $S_{[\frac{\pi}{2},\frac{\pi}{2}+\theta_{0}]}$. 
 By $(iii)$ of Lemma \ref{Grouth f(e) 1} we have
\begin{align*}
&I\geq \frac{1}{\pi}\int_{S_{[\frac{\pi}{2},\frac{\pi}{2}+\theta_{0}]}}\int_{0}^{\frac{1}{\varepsilon}}\frac{1}{t}|\mbox{Re}\, \varphi_{\varepsilon}(z/t)|\,|f_{\varepsilon}(z/t)|\,d\mu(t)\, dA(z)\\
&=\frac{1}{\pi}\int_{S_{[\frac{\pi}{2},\frac{\pi}{2}+\theta_{0}]}}\int_{0}^{\frac{1}{\varepsilon}} \left(\frac{1}{\sqrt{x^2+ (y+t\varepsilon)^2}}\right)^{3+\varepsilon}\frac{d\mu(t)}{t^{-1-\varepsilon}} (-x)dxdy.\\
\end{align*}
Using polar coordinates and working as is Case I, we arrive at the desired conclusion with constant $k(1)=-2^{-(\varepsilon+1)}\int_{\frac{\pi}{2}}^{\frac{\pi}{2}+\theta_{0}}cos(\theta)\frac{d\theta}{4\pi}$.\\
\end{proof}

\begin{Th}\label{B Haus Berg}
Let  $1\leq p<\infty$. The operator $\mathcal{H}_{\mu}$ is bounded on
$A^{p}(\mathbb{U})$  if and only if
$$
\int_{0}^{\infty}\frac{1}{t^{1-\frac{2}{p}}}\,d\mu(t)<\infty.
$$
\end{Th}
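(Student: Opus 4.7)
The plan is to prove both directions separately.

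For the sufficiency direction, suppose $M := \int_{0}^{\infty} t^{\frac{2}{p}-1}\, d\mu(t) < \infty$. By Lemma \ref{Well defined lemma 1}, $\mathcal{H}_\mu f$ is already well-defined for $f\in A^p(\mathbb{U})$, so I would apply Minkowski's integral inequality for $L^p(dA)$ to the defining formula of $\mathcal{H}_\mu f$ to obtain
$$
\|\mathcal{H}_\mu f\|_{A^p(\mathbb{U})} \leq \int_{0}^{\infty} \frac{1}{t}\, \|f(\cdot/t)\|_{A^p(\mathbb{U})}\, d\mu(t).
$$
The change of variables $w=z/t$ gives the dilation identity $\|f(\cdot/t)\|_{A^p(\mathbb{U})}^p = t^{2}\|f\|_{A^p(\mathbb{U})}^p$, so the right-hand side equals $\|f\|_{A^p(\mathbb{U})}\cdot M$. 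This establishes boundedness with $\|\mathcal{H}_\mu\| \leq M$, which is also the inequality needed for the norm identity in Theorem \ref{Norm Haus Berg}.

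For the necessity direction, I would feed the test functions $f_\varepsilon$ through the estimates developed in the previous lemmas. Since $|f_\varepsilon| = g_{p\varepsilon,\varepsilon}$, Lemma \ref{Grouth f(e)} (applied with $\lambda = p\varepsilon$, $\delta=\varepsilon$) yields the upper bound
$$
\|f_\varepsilon\|_{A^p(\mathbb{U})}^p \leq 2^{\frac{2+p\varepsilon}{2}} \cdot \frac{1}{p\varepsilon \cdot \varepsilon^{p\varepsilon}}.
$$
On the other hand, Lemma \ref{Grouth Hf(e) 1} gives, for $\varepsilon \in (0, \varepsilon(p)]$,
$$
\|\mathcal{H}_\mu f_\varepsilon\|_{A^p(\mathbb{U})}^p \geq k(p) \left(\int_{0}^{1/\varepsilon} t^{\frac{2}{p}+\varepsilon-1}\, d\mu(t)\right)^p \cdot \frac{1}{p\varepsilon}.
$$
Assuming $\mathcal{H}_\mu$ is bounded with norm $N$, the two bounds combine to $\|\mathcal{H}_\mu f_\varepsilon\|_{A^p(\mathbb{U})}^p \leq N^p \|f_\varepsilon\|_{A^p(\mathbb{U})}^p$, and the factor $1/(p\varepsilon)$ cancels. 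After taking the $p$-th root, one is left with
$$
\int_{0}^{1/\varepsilon} t^{\frac{2}{p}+\varepsilon-1}\, d\mu(t) \leq C(p)\, N \cdot \varepsilon^{-\varepsilon}.
$$

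The remaining step is the passage to the limit $\varepsilon \to 0^+$. Since $\varepsilon^{-\varepsilon}\to 1$, the right-hand side stays bounded. For the left-hand side I would apply Fatou's lemma to the sequence $h_\varepsilon(t) := \mathbf{1}_{(0,1/\varepsilon)}(t)\, t^{\frac{2}{p}+\varepsilon-1}$, noting that $\liminf_{\varepsilon\to 0^+} h_\varepsilon(t) = t^{\frac{2}{p}-1}$ for every $t>0$. This yields the required conclusion
$$
\int_{0}^{\infty} t^{\frac{2}{p}-1}\, d\mu(t) \leq C(p)\, N < \infty.
$$
The heavy lifting has been done in Lemmas \ref{Grouth f(e) 1} and \ref{Grouth Hf(e) 1} (which split into three cases by the range of $p$); with those in hand the only genuine care needed is the Fatou step, and in particular the benign behaviour of $\varepsilon^{-\varepsilon}$ at $0$.
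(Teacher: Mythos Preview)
Your proposal is correct and follows essentially the same approach as the paper: Minkowski plus the dilation identity for sufficiency, and the test functions $f_\varepsilon$ together with Lemmas \ref{Grouth f(e)} and \ref{Grouth Hf(e) 1} for necessity. You make the limiting step explicit via Fatou's lemma and the fact that $\varepsilon^{-\varepsilon}\to 1$, whereas the paper simply writes ``letting $\varepsilon\to 0$''; otherwise the arguments are the same.
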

\begin{proof}
Suppose that
$$
\int_{0}^{\infty}\frac{1}{t^{1-\frac{2}{p}}}\,d\mu(t)<\infty,
$$
then Lemma \ref{Well defined lemma 1} implies that $\mathcal{H}_{\mu}(f)$ is well defined and holomorphic in $\mathbb{U}$.
An easy computation involving the Minkowski inequality shows that for all
$1\leq p<\infty$
\begin{align*}
||\mathcal{H}_{\mu}(f)||_{A^{p}(\mathbb{U})}&=\left(\int_{\mathbb{U}}\left|\int_{0}^{\infty}\frac{1}{t}f\left(\frac{z}{t}\right)\,d\mu(t)\right|^{p}dA(z)\right)^{1/p}\\
&\leq ||f||_{A^{p}(\mathbb{U})}\int_{0}^{\infty}\frac{1}{t^{1-\frac{2}{p}}}\, d\mu(t)<\infty.
\end{align*}
Thus $\mathcal{H}_{\mu}$ is bounded on $A^{p}(\mathbb{U})$.\\

Conversely, suppose that $\mathcal{H}_{\mu}$ is bounded. Let $f_{\varepsilon}(z)=(z+ \varepsilon i)^{-(\frac{2}{p}+\varepsilon)}$ with $\varepsilon>0$ small enough. By Lemma \ref{Grouth f(e)}
\begin{equation}\label{norm f(e)}
||f_\varepsilon||^{p}_{A^{p}(\mathbb{U})}\sim \frac{1}{p\varepsilon \varepsilon^{p\varepsilon}}.
\end{equation}
Moreover Lemma \ref{Grouth Hf(e) 1} implies that there is a constant $k=k(p)>0$  such that

\begin{align*}
||f_\varepsilon||^{p}_{A^{p}(\mathbb{U})} ||\mathcal{H}_{\mu}||_{A^{p}(\mathbb{U})\to A^{p}(\mathbb{U})}&\geq ||\mathcal{H}_{\mu}(f_{\varepsilon})||_{A^{p}(\mathbb{U})}^{p}\\
&\geq k \left(\int_{0}^{\frac{1}{\varepsilon}} \frac{1}{t^{1-\frac{2}{p}-\varepsilon}}d\mu(t)\right)^{p}\frac{1}{p\varepsilon}.
\end{align*}
Thus by letting $\varepsilon\to 0$, we have in comparison to (\ref{norm f(e)})
 $$
\int_{0}^{\infty}\frac{1}{t^{1-\frac{2}{p}}}\,d\mu(t)<\infty.
$$
\end{proof}

In our way to computing the norm of $\mathcal{H}_{\mu}$ we will firstly  compute the norm of the truncated Hausdorff operator $\mathcal{H}_{\mu}^{\delta}$ given by :
 $$
\mathcal{H}_{\mu}^{\delta}(f)(z):=\int_{0}^{\infty}\frac{1}{t}f\left(\frac{z}{t}\right)X_{[\delta,1/\delta]}(t)\,d\mu(t)=
\int_{\delta}^{\frac{1}{\delta}}\frac{1}{t}f\left(\frac{z}{t}\right)\,d\mu(t).
$$

\begin{Prop}\label{Norm Haus Berg Truncate}
Let  $1\leq p<\infty$ and $0<\delta<1$. If
$$
\int_{0}^{\infty}\frac{1}{t^{1-\frac{2}{p}}}\,d\mu(t)<\infty,
$$
then $\mathcal{H}_{\mu}^{\delta}$ is bounded with
$$
||\mathcal{H}_{\mu}^{\delta}||_{A^{p}(\mathbb{U})\to A^{p}(\mathbb{U})}=\int_{\delta}^{\frac{1}{\delta}}\frac{1}{t^{1-\frac{2}{p}}}\,d\mu(t).
$$
\end{Prop}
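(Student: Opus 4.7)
The plan is to establish matching upper and lower bounds on the operator norm, leveraging the compactness of the support $[\delta,1/\delta]$. The \emph{upper bound} follows exactly as in the sufficiency part of Theorem \ref{B Haus Berg}: the substitution $w=z/t$ gives the scaling $\|t^{-1}f(\cdot/t)\|_{A^p(\mathbb{U})}=t^{\frac{2}{p}-1}\|f\|_{A^p(\mathbb{U})}$, and Minkowski's integral inequality applied to the defining formula yields
\[
\|\mathcal{H}_{\mu}^{\delta}(f)\|_{A^p(\mathbb{U})} \le \|f\|_{A^p(\mathbb{U})} \int_{\delta}^{1/\delta} t^{\frac{2}{p}-1}\,d\mu(t).
\]

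For the \emph{lower bound}, I would apply $\mathcal{H}_\mu^\delta$ to the test functions $f_\varepsilon(z)=(z+i\varepsilon)^{-\alpha}$ with $\alpha := \frac{2}{p}+\varepsilon$ defined earlier. The scaling identity gives
\[
\mathcal{H}_\mu^\delta(f_\varepsilon)(z)=\int_{\delta}^{1/\delta} t^{\alpha-1}(z+it\varepsilon)^{-\alpha}\,d\mu(t).
\]
The key point, specific to the truncated setting, is that $t\varepsilon \le \varepsilon/\delta$ is uniformly small on $[\delta,1/\delta]$, so all shifts $it\varepsilon$ are small perturbations of $i\varepsilon$. Concretely, for $|z|\ge R$ with $R$ fixed (say $R\ge 2/\delta$), one has $(z+it\varepsilon)^{-\alpha}=(z+i\varepsilon)^{-\alpha}\bigl(1+O(\varepsilon/(\delta R))\bigr)$ uniformly in $t\in[\delta,1/\delta]$; setting $C_\varepsilon := \int_\delta^{1/\delta} t^{\alpha-1}\,d\mu(t)$, which tends to $C_0 := \int_\delta^{1/\delta} t^{\frac{2}{p}-1}\,d\mu(t)$ by dominated convergence, this gives
\[
|\mathcal{H}_\mu^\delta(f_\varepsilon)(z)| \ge |f_\varepsilon(z)|\,C_\varepsilon\bigl(1-O(\varepsilon/(\delta R))\bigr), \qquad |z|\ge R.
\]

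Raising to the $p$-th power, integrating over $\{|z|\ge R\}\cap\mathbb{U}$, and dividing by $\|f_\varepsilon\|_{A^p(\mathbb{U})}^p$, the lower bound reduces to verifying that
\[
\frac{\int_{\{|z|\ge R\}\cap\mathbb{U}}|f_\varepsilon|^p\,dA/\pi}{\|f_\varepsilon\|_{A^p(\mathbb{U})}^p}\longrightarrow 1 \quad\text{as }\varepsilon\to 0
\]
for each fixed $R$. I would verify this via the dilation $w=z/\varepsilon$: both integrals then become $\varepsilon^{-p\varepsilon}$ times integrals of $|w+i|^{-(2+p\varepsilon)}$ (over $\mathbb{U}$ and over $\{|w|\ge R/\varepsilon\}\cap\mathbb{U}$ respectively), whose divergences as $\varepsilon\to 0$ come entirely from $|w|\to\infty$ and are both asymptotically $(p\varepsilon)^{-1}$ since $R/\varepsilon\to\infty$. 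Taking $\liminf_{\varepsilon\to 0}$ then gives $\|\mathcal{H}_\mu^\delta\|_{A^p(\mathbb{U})\to A^p(\mathbb{U})}^p \ge C_0^p$, matching the upper bound. The main obstacle is precisely this tail-ratio asymptotic: Lemma \ref{Grouth f(e)} pins down $\|f_\varepsilon\|_{A^p(\mathbb{U})}^p$ only within a constant factor, whereas an \emph{exact} norm identity demands a sharper leading-order computation, made possible here by the compactness of $[\delta,1/\delta]$.
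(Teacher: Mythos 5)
Your proof is correct, and the lower bound is obtained by a genuinely different mechanism than the paper's. The paper also runs an approximate-eigenfunction argument exploiting the compactness of $[\delta,1/\delta]$, but with the test functions $f_{\varepsilon}(z)=(z+i)^{-\frac{2}{p}-\varepsilon}$ (fixed shift $i$, not $i\varepsilon$): it writes $\mathcal{H}_{\mu}^{\delta}(f_{\varepsilon})-\bigl(\int_{\delta}^{1/\delta}t^{\frac{2}{p}-1}d\mu\bigr)f_{\varepsilon}$ as $\int_{\delta}^{1/\delta}t^{\frac{2}{p}-1}(\varphi_{\varepsilon,z}(t)-\varphi_{\varepsilon,z}(1))\,d\mu(t)$ with $\varphi_{\varepsilon,z}(t)=t^{\varepsilon}(z+ti)^{-\frac{2}{p}-\varepsilon}$, controls the increment by the mean value theorem in $t$, and uses Lemma \ref{Grouth f(e)} to show the resulting $A^{p}$-norm is $o(\|f_{\varepsilon}\|_{A^{p}(\mathbb{U})})$ (the derivative bound produces $\varepsilon\,\|g_{p\varepsilon,\delta}\|_{L^{p}(dA)}$, which is $O(\varepsilon^{1-1/p})$, and a bounded term $\|g_{p(\varepsilon+1),\delta}\|_{L^{p}(dA)}$, both negligible against $\|f_{\varepsilon}\|_{A^{p}(\mathbb{U})}\sim(p\varepsilon)^{-1/p}\varepsilon^{-\varepsilon}$); the triangle inequality then gives the lower bound as a single global norm estimate, with no need to localize to $\{|z|\ge R\}$ or to track where the mass of $|f_{\varepsilon}|^{p}$ concentrates. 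Your version instead uses the shift $i\varepsilon$, a pointwise multiplicative comparison $(z+it\varepsilon)^{-\alpha}=(z+i\varepsilon)^{-\alpha}(1+O(\varepsilon/(\delta R)))$ valid on $\{|z|\ge R\}$ (which is sound: $|z+it\varepsilon|\ge|z|$ on $\mathbb{U}$, and the branch is unproblematic since both arguments lie in $(0,\pi)$ and the ratio is near $1$), and then the tail-concentration asymptotic for $\|f_{\varepsilon}\|^{p}_{A^{p}(\mathbb{U})}$; the last step is the only delicate one, and your resolution is right — the near part $\{|w|<R/\varepsilon\}$ contributes only $O(\log(1/\varepsilon))$ while the whole integral is bounded below by $c/(p\varepsilon)$, so the ratio tends to $1$ and the crude constants of Lemma \ref{Grouth f(e)} suffice. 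The paper's route is somewhat shorter because the error is measured once in norm rather than pointwise plus a mass-location argument; yours has the merit of making transparent exactly why truncation yields the precise constant, namely that all dilates $t\in[\delta,1/\delta]$ perturb the test function uniformly by $O(\varepsilon)$.
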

\begin{proof}
 As in Theorem \ref{B Haus Berg} an application of Minkowski inequality gives
\begin{equation}\label{Cut Hausd}
||\mathcal{H}_{\mu}^{\delta}||_{A^{p}(\mathbb{U})\to A^{p}(\mathbb{U})}\leq \int_{\delta}^{\frac{1}{\delta}}\frac{1}{t^{1-\frac{2}{p}}}d\mu(t).
\end{equation}
Let $f_{\varepsilon}(z)=(z+i)^{-\frac{2}{p}-\varepsilon}$ with $\varepsilon>0$ small enough. We calculate
\begin{align*}
\mathcal{H}_{\mu}^{\delta}(f_{\varepsilon})(z)&-f_{\varepsilon}(z)\int_{\delta}^{\frac{1}{\delta}}\frac{1}{t^{1-\frac{2}{p}}}d\mu(t)\\
&=\int_{\delta}^{\frac{1}{\delta}}\frac{1}{t^{1-\frac{2}{p}}}\left(\varphi_{\varepsilon,z}(t)-\varphi_{\varepsilon,z}(1)\right)d\mu(t),
\end{align*}
where
$$
\varphi_{\varepsilon,z}(t)=\frac{t^{\varepsilon}}{(z+ti)^{\frac{2}{p}+\varepsilon}}.
$$
For any $t \in [\delta, 1/\delta]$,  calculus gives
\begin{align*}
\left|\varphi_{\varepsilon,z}(t)-\varphi_{\varepsilon,z}(1)\right|&\leq|t-1|\sup\{|\varphi_{\varepsilon,z}'(s)|:s\in [\delta, 1/\delta]\}\\
&\leq \frac{1}{\delta}\left(\frac{\varepsilon \delta^{\varepsilon-1}}{|z+i\delta|^{\frac{2}{p}+\varepsilon}}+
 \frac{(\frac{2}{p}+\varepsilon) (1/\delta)^{\varepsilon}}{|z+i\delta|^{\frac{2}{p}+\varepsilon+1}} \right)\\
 &=\varepsilon \delta ^{\varepsilon-2}g_{p\varepsilon,\delta}(z)+ (\frac{2}{p}+\varepsilon) (1/\delta)^{\varepsilon+1}g_{p(\varepsilon+1),\delta}(z).
\end{align*}
Where above we followed the notation of Lemma \ref{Grouth f(e)}.
Thus by an easy application of Minkowski inequality followed by the triangular inequality we have
\begin{align*}
||\mathcal{H}_{\mu}^{\delta}(f_{\varepsilon})(z)&-f_{\varepsilon}(z)\int_{\delta}^{\frac{1}{\delta}}\frac{1}{t^{1-\frac{2}{p}}}d\mu(t)||_{A^{p}(\mathbb{U})}\\
&\leq\int_{\delta}^{\frac{1}{\delta}}\frac{1}{t^{1-\frac{2}{p}}}||\left(\varphi_{\varepsilon,z}(t)-\varphi_{\varepsilon,z}(1)\right)||_{A^{p}(\mathbb{U})}d\mu(t)\\
&\leq   \int_{\delta}^{\frac{1}{\delta}}\frac{1}{t^{1-\frac{2}{p}}}\,d\mu(t)\left(\varepsilon \delta ^{\varepsilon-2}||g_{p\varepsilon,\delta}||_{L^{p}(dA)}+(\frac{2}{p}+\varepsilon) (1/\delta)^{\varepsilon+1}||g_{p(\varepsilon+1),\delta}||_{L^{p}(dA)}\right).
\end{align*}

This, together with Lemma \ref{Grouth f(e)} (recall that $|f_{\varepsilon}|=g_{p\varepsilon,\varepsilon}$), yields

\begin{align*}
&\frac{||\mathcal{H}_{\mu}^{\delta}(f_{\varepsilon})(z)-f_{\varepsilon}(z)\int_{\delta}^{\frac{1}{\delta}}\frac{1}{t^{1-\frac{2}{p}}}
d\mu(t)||_{A^{p}(\mathbb{U})}}{||f_{\varepsilon}||_{A^{p}(\mathbb{U})}}\\
&\leq  \int_{\delta}^{\frac{1}{\delta}}\frac{1}{t^{1-\frac{2}{p}}}\,d\mu(t)\times
\frac{\varepsilon \delta ^{\varepsilon-2}||g_{p\varepsilon,\delta}||_{L^{p}(dA)}+(\frac{2}{p}+\varepsilon) (1/\delta)^{\varepsilon+1}||g_{p(\varepsilon+1),\delta}||_{L^{p}(dA)}}{||f_{\varepsilon}||_{A^{p}(\mathbb{U})}}\to 0
\end{align*}
as $\varepsilon \to 0$. This and  (\ref{Cut Hausd}) imply that

$$
\int_{\delta}^{\frac{1}{\delta}}\frac{1}{t^{1-\frac{2}{p}}}d\mu(t)= ||\mathcal{H}_{\mu}^{\delta}||_{A^{p}(\mathbb{U})\to A^{p}(\mathbb{U})}.
$$
\end{proof}
\begin{Th}\label{Norm Haus Berg}
Let  $1\leq p<\infty$. If
$$
\int_{0}^{\infty}\frac{1}{t^{1-\frac{2}{p}}}\,d\mu(t)<\infty
$$
then
$$
||\mathcal{H}_{\mu}||_{A^{p}(\mathbb{U})\to A^{p}(\mathbb{U})}=\int_{0}^{\infty}\frac{1}{t^{1-\frac{2}{p}}}\,d\mu(t).
$$
\end{Th}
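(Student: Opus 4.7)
The plan is to combine the upper bound already established in Theorem \ref{B Haus Berg} with a lower bound obtained by passing to the limit $\delta\to 0^{+}$ in the exact norm formula of Proposition \ref{Norm Haus Berg Truncate}. The Minkowski-type computation in the proof of Theorem \ref{B Haus Berg} already yields
$$
||\mathcal{H}_{\mu}||_{A^{p}(\mathbb{U})\to A^{p}(\mathbb{U})}\leq \int_{0}^{\infty}\frac{d\mu(t)}{t^{1-2/p}},
$$
so only the reverse inequality requires proof.

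For the lower bound, for any $f\in A^{p}(\mathbb{U})$ and $0<\delta<1$ I would write
$$
\mathcal{H}_{\mu}(f)(z)-\mathcal{H}_{\mu}^{\delta}(f)(z)=\int_{(0,\delta)\cup(1/\delta,\infty)}\frac{1}{t}f\!\left(\frac{z}{t}\right)d\mu(t),
$$
and apply the same Minkowski estimate used in Theorem \ref{B Haus Berg}, but restricted to the complement of $[\delta,1/\delta]$. Using the scaling identity $||f(\cdot/t)||_{A^{p}(\mathbb{U})}=t^{2/p}||f||_{A^{p}(\mathbb{U})}$, this yields
$$
||\mathcal{H}_{\mu}(f)-\mathcal{H}_{\mu}^{\delta}(f)||_{A^{p}(\mathbb{U})}\leq ||f||_{A^{p}(\mathbb{U})}\,E(\delta),
$$
where
$$
E(\delta):=\int_{0}^{\delta}\frac{d\mu(t)}{t^{1-2/p}}+\int_{1/\delta}^{\infty}\frac{d\mu(t)}{t^{1-2/p}}.
$$
The hypothesis that the full integral is finite forces $E(\delta)\to 0$ as $\delta\to 0^{+}$ by dominated convergence.

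From the triangle inequality,
$$
||\mathcal{H}_{\mu}^{\delta}(f)||_{A^{p}(\mathbb{U})}\leq ||\mathcal{H}_{\mu}(f)||_{A^{p}(\mathbb{U})}+||f||_{A^{p}(\mathbb{U})}E(\delta)\leq\bigl(||\mathcal{H}_{\mu}||+E(\delta)\bigr)||f||_{A^{p}(\mathbb{U})}.
$$
Taking the supremum over $f$ with $||f||_{A^{p}(\mathbb{U})}\leq 1$ and applying Proposition \ref{Norm Haus Berg Truncate} gives
$$
\int_{\delta}^{1/\delta}\frac{d\mu(t)}{t^{1-2/p}}=||\mathcal{H}_{\mu}^{\delta}||_{A^{p}(\mathbb{U})\to A^{p}(\mathbb{U})}\leq ||\mathcal{H}_{\mu}||_{A^{p}(\mathbb{U})\to A^{p}(\mathbb{U})}+E(\delta).
$$
Sending $\delta\to 0^{+}$, the left-hand side increases to $\int_{0}^{\infty}t^{-(1-2/p)}d\mu(t)$ while $E(\delta)\to 0$, which produces the matching lower bound and hence equality.

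No step presents a real obstacle, since the delicate work of computing the norm of the truncated operator has already been packaged into Proposition \ref{Norm Haus Berg Truncate}; one only needs to confirm that the tail cutoff error vanishes in the limit, which is immediate from the integrability hypothesis.
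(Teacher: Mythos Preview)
Your proposal is correct and follows essentially the same approach as the paper: both use the upper bound from Theorem \ref{B Haus Berg}, the Minkowski estimate for $\mathcal{H}_{\mu}-\mathcal{H}_{\mu}^{\delta}$, the exact norm of $\mathcal{H}_{\mu}^{\delta}$ from Proposition \ref{Norm Haus Berg Truncate}, and then let $\delta\to 0$. The only cosmetic difference is that the paper rewrites the resulting inequality as $||\mathcal{H}_{\mu}||\geq \int_{0}^{\infty}-2E(\delta)$ before taking the limit, whereas you pass to the limit directly in $\int_{\delta}^{1/\delta}\leq ||\mathcal{H}_{\mu}||+E(\delta)$.
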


\begin{proof}
By Theorem \ref{B Haus Berg} we have that   
$$
||\mathcal{H}_{\mu}||_{A^{p}(\mathbb{U})\to A^{p}(\mathbb{U})}\leq \int_{0}^{\infty}\frac{1}{t^{1-\frac{2}{p}}}\, d\mu(t).
$$
 Minkowski inequality implies that
\begin{equation}\label{Diff Hausd}
||\mathcal{H}_{\mu}-\mathcal{H}_{\mu}^{\delta}||_{A^{p}(\mathbb{U})\to A^{p}(\mathbb{U})}\leq \int_{(0,\delta)\cup (\frac{1}{\delta},\infty)}\frac{1}{t^{1-\frac{2}{p}}}d\mu(t).
\end{equation}
By Proposition \ref{Norm Haus Berg Truncate}

$$
\int_{\delta}^{\frac{1}{\delta}}\frac{1}{t^{1-\frac{2}{p}}}d\mu(t)= ||\mathcal{H}_{\mu}^{\delta}||_{A^{p}(\mathbb{U})\to A^{p}(\mathbb{U})}.
$$
This, combined with (\ref{Diff Hausd}), allows us to conclude that
$$
||\mathcal{H}_{\mu}||_{A^{p}(\mathbb{U})\to A^{p}(\mathbb{U})}\geq \int_{0}^{\infty}\frac{1}{t^{1-\frac{2}{p}}}d\mu(t)-
2\int_{(0,\delta)\cup(1/\delta,\infty)}\frac{1}{t^{1-\frac{2}{p}}}d\mu(t)\to\int_{0}^{\infty}\frac{1}{t^{1-\frac{2}{p}}}d\mu(t)
$$
as $\delta\to 0$.  Hence,
$$
||\mathcal{H}_{\mu}||_{A^{p}(\mathbb{U})\to A^{p}(\mathbb{U})}=\int_{0}^{\infty}\frac{1}{t^{1-\frac{2}{p}}}d\mu(t).
$$
\end{proof}

\subsection{The quasi-Hausdorff operator}

Let $f,g\in A^{2}(\mathbb{U})$ and assume that $\mathcal{H}_{\mu}$ is bounded on $A^{2}(\mathbb{U})$. Thus
$$
\int_{0}^{\infty}\frac{1}{t^{1-\frac{2}{p}}}d\mu(t)<\infty.
$$
We have
\begin{align*}
\int_{\mathbb{U}}\int_{0}^{\infty}\frac{1}{t}\left|f\left(\frac{z}{t}\right)\right||g(z)|\, d\mu(t) dA(z)&\leq
\left(\int_{\mathbb{U}}\left(\int_{0}^{\infty}\frac{1}{t}\left|f\left(\frac{z}{t}\right)\right|\, d\mu(t)\right)^{2} dA(z)\right)^{1/2}||g||_{A^{2}(\mathbb{U})}\\
&\leq \left(\int_{0}^{\infty}\frac{1}{t^{1-\frac{2}{p}}}d\mu(t)\right)^{1/2} ||f||_{A^{2}(\mathbb{U})}||g||_{A^{2}(\mathbb{U})}<\infty,
\end{align*}
where we applied the Cauchy-Schwarz  and Minkowski inequalities. Therefore
\begin{align*}
\langle \mathcal{H}_{\mu}(f),g\rangle &=\int_{\mathbb{U}}\mathcal{H}_{\mu}(f)(z)\overline{g(z)}dA(z)\\
&=\frac{1}{\pi}\int_{\mathbb{U}}\int_{0}^{\infty}\frac{1}{t}f(\frac{z}{t})\, d\mu(t) \overline{g(z)}dA(z)\\
&=\frac{1}{\pi}\int_{0}^{\infty}\int_{\mathbb{U}}\frac{1}{t}f(\frac{z}{t}) \overline{g(z)}\,dA(z)\, d\mu(t)\\
&=\frac{1}{\pi}\int_{\mathbb{U}}f(z) \overline{\int_{0}^{\infty} t g(tz)\, d\mu(t)}\,dA(z),\\
\end{align*}
where we applied a change of variables and  Fubini's Theorem twice. This means that the adjoint  $\mathcal{H}_{\mu}^{*}$ of $\mathcal{H}_{\mu}$ on $A^{2}(\mathbb{U})$ is:

$$
\mathcal{H}_{\mu}^{*}(f)(z)=\int_{0}^{\infty} t f(tz)\, d\mu(t).
$$
We will consider $\mathcal{H}_{\mu}^{*}$ on $A^{p}(\mathbb{U})$ and suppose for a moment that it is well defined for functions in $A^{p}(\mathbb{U})$.
Let $\lambda(t)=t^{-1}, t>0$, then $\lambda$ maps $(0,\infty)$ onto $(0,\infty)$ and is measurable. Set $f(tz)=f_{z}(t)$ then
\begin{align*}
\mathcal{H}_{\mu}^{*}(f)(z)&=\int_{0}^{\infty} t f(tz)\, d\mu(t)\\
&=\int_{0}^{\infty} t f_{z}(t)\, d\mu(t)\\
&=\int_{0}^{\infty} \frac{1}{\lambda(t)} f_{z}\left(\frac{1}{\lambda(t)}\right)\, d\mu(t)\\
&=\int_{0}^{\infty} \frac{1}{t} f\left(\frac{z}{t}\right)\, d\nu(t),\\
&=\mathcal{H}_{\nu}(f)(z)
\end{align*}
where $d\nu =d \lambda_{*}(\mu)(t)$ and $\lambda_{*}(\mu)$ is the push-forward measure of $\mu$ with respect to $\lambda$.
We can now apply  the results of the first part of the paper to have:

\begin{Th}
Let $1\leq p< \infty$. The quasi-Hausdorff operator $\mathcal{H}_{\mu}^{*}$ is bounded on $A^{p}(\mathbb{U})$ if and only if
$$
\int_{0}^{\infty}t^{1-\frac{2}{p}}\,d\mu(t)<\infty.
$$
Moreover
$$
||\mathcal{H}_{\mu}^{*}||_{A^{p}(\mathbb{U})\to A^{p}(\mathbb{U})}=\int_{0}^{\infty}t^{1-\frac{2}{p}}\,d\mu(t).
$$
\end{Th}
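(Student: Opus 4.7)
The plan is to leverage the identity $\mathcal{H}_\mu^*(f) = \mathcal{H}_\nu(f)$ already established just above the statement, where $\nu = \lambda_*(\mu)$ is the push-forward of $\mu$ under $\lambda(t) = 1/t$. Since $\lambda$ is a homeomorphism of $(0,\infty)$ onto itself, $\nu$ is again a positive $\sigma$-finite Borel measure on $(0,\infty)$, so Theorems \ref{B Haus Berg} and \ref{Norm Haus Berg} apply to $\mathcal{H}_\nu$ verbatim. They give that $\mathcal{H}_\nu$ is bounded on $A^p(\mathbb{U})$ if and only if $\int_0^\infty t^{\frac{2}{p}-1}\,d\nu(t) < \infty$, in which case its operator norm equals this integral.

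It then remains only to translate integrals against $\nu$ back into integrals against $\mu$. By the standard change-of-variables formula for push-forward measures, for every non-negative Borel function $h$ on $(0,\infty)$,
\[
\int_0^\infty h(t)\,d\nu(t) = \int_0^\infty h(\lambda(t))\,d\mu(t) = \int_0^\infty h(1/t)\,d\mu(t).
\]
Applying this with $h(t) = t^{\frac{2}{p}-1}$ gives $h(1/t) = t^{1-\frac{2}{p}}$, hence
\[
\int_0^\infty \frac{1}{t^{1-\frac{2}{p}}}\,d\nu(t) = \int_0^\infty t^{1-\frac{2}{p}}\,d\mu(t),
\]
from which both the boundedness criterion and the norm formula follow immediately via the identity $\mathcal{H}_\mu^* = \mathcal{H}_\nu$.

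There is essentially no hard step once the representation $\mathcal{H}_\mu^* = \mathcal{H}_\nu$ is in place; the only bookkeeping point is that the formula $\mathcal{H}_\mu^*(f)(z) = \int_0^\infty t\,f(tz)\,d\mu(t)$ should define a holomorphic function on $\mathbb{U}$ for $f\in A^p(\mathbb{U})$ whenever $\int_0^\infty t^{1-\frac{2}{p}}\,d\mu(t)<\infty$. This is automatic, because under this assumption the change of variables above shows that $\int_0^\infty t^{-(1-\frac{2}{p})}\,d\nu(t)<\infty$, so Lemma \ref{Well defined lemma 1} applied to $\nu$ yields that $\mathcal{H}_\nu(f)$, and hence $\mathcal{H}_\mu^*(f)$, is a well-defined holomorphic function on $\mathbb{U}$.
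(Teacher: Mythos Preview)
Your argument is correct and is exactly the approach the paper intends: the paper merely writes ``We can now apply the results of the first part of the paper'' after establishing $\mathcal{H}_{\mu}^{*}=\mathcal{H}_{\nu}$ with $\nu=\lambda_{*}\mu$, and you have simply spelled out the change-of-variables computation and the appeal to Theorems~\ref{B Haus Berg} and~\ref{Norm Haus Berg} (and Lemma~\ref{Well defined lemma 1} for well-definedness) that this entails.
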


\end{document}